\newtheorem{thm}{Theorem}[section]
\newtheorem{cor}[thm]{Corollary}
\newtheorem{lmm}[thm]{Lemma}
\newtheorem{prp}[thm]{Proposition}
\theoremstyle{definition}
\newtheorem{dfn}[thm]{Definition}
\numberwithin{equation}{section}
\begin{document}

\baselineskip=17pt


\title{Real Zeros of the Hurwitz zeta function}

\author{Toshiki Matsusaka}
\address{Graduate School of Mathematics, Kyushu University, Motooka 744, Nishi-ku Fukuoka 819-0395, Japan}
\email{toshikimatsusaka@gmail.com}

\date{}

\begin{abstract}
It is well known that real zeros of the Riemann zeta function are negative even integers. As for real zeros of the Hurwitz zeta function, T. Nakamura recently gave an existence condition in the intervals $(0,1)$ and $(-1,0)$. We generalize this result for all negative real numbers.
\end{abstract}

\subjclass[2010]{Primary 11M35, Secondary 11M20.}

\keywords{Hurwitz zeta function; Real zeros.}

\maketitle

\section{Introduction}\label{s1} 
The Hurwitz zeta function was first introduced by Hurwitz \cite{Hur} as a generalization of the Riemann zeta function $\zeta(s) = \sum_{n=1}^{\infty} n^{-s}$.

\begin{dfn}
Let $0<a\leq1$, $s \in \mathbb{C}$. Then the Hurwitz zeta function $\zeta(s, a)$ is defined by
\begin{align*}
\zeta(s,a) := \sum_{n=0}^{\infty}(n+a)^{-s},\quad s:=\sigma + it,\quad \sigma >1,\quad t\in \mathbb{R}.
\end{align*}
\end{dfn}

This series converges absolutely in the half-plane $\sigma > 1$ and uniformly in each compact subset of this half-plane. Moreover $\zeta(s,a)$ is analytically continued to the whole complex plane except for a simple pole at $s=1$. We easily see that the Hurwitz zeta function has no real zeros in this half-plane $\sigma > 1$. For the special case of $a = 1$, the Riemann zeta function $\zeta(s) = \zeta(s,1)$ has real zeros at the negative even integers called the trivial zeros. In general, Nakamura \cite{Naka1, Naka2} investigated real zeros of $\zeta(s,a)$ in the intervals $(0,1)$ and $(-1,0)$, and obtained the following existence conditions.

\begin{thm}\label{Naka}
$($\cite{Naka1, Naka2}$)$ Let $b_2^{\pm}:=(3\pm\sqrt{3})/6$. Then
\begin{itemize}
\item[$(1)$]$\zeta(\sigma,a)$ has real zeros in the interval $(0,1)$ if and only if $0 < a < 1/2$.
\item[$(2)$]$\zeta(\sigma,a)$ has real zeros in the interval $(-1,0)$ if and only if $0< a < b_2^{-}$ or $1/2 < a < b_2^{+}$.
\end{itemize}
Note that $b_2^{\pm}$ are the roots of the second Bernoulli polynomial $B_2(x):=x^2-x+1/6$.
\end{thm}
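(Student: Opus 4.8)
The plan is to reduce everything to the boundary behaviour of $\sigma \mapsto \zeta(\sigma,a)$ on the two intervals, using the endpoint values together with two elementary tools. From the analytic continuation one has $\zeta(-n,a) = -B_{n+1}(a)/(n+1)$ for integers $n \ge 0$, so in particular $\zeta(0,a) = \tfrac12 - a$ and $\zeta(-1,a) = -B_2(a)/2$; the simple pole at $s=1$ (with residue $1$, independent of $a$) gives $\zeta(\sigma,a) \to -\infty$ as $\sigma \to 1^-$. The first tool is the recursion $\partial_a \zeta(\sigma,a) = -\sigma\,\zeta(\sigma+1,a)$, obtained by differentiating the defining series in $\sigma > 1$ and continuing. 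The second is the identity $\zeta(\sigma,\tfrac12) = (2^\sigma-1)\zeta(\sigma)$, together with the standard fact that $\zeta(\sigma) < 0$ for $\sigma \in (-1,0)\cup(0,1)$ (immediate from $(1-2^{1-\sigma})\zeta(\sigma) = \sum_{n\ge1}(-1)^{n-1}n^{-\sigma}$ and the absence of real zeros of $\zeta$ in $(-1,1)$).

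For part $(1)$, on $(0,1)$ we have $\sigma+1 > 1$, so $\zeta(\sigma+1,a) > 0$ and hence $\partial_a\zeta(\sigma,a) < 0$: the function is strictly decreasing in $a$. At $a=\tfrac12$ we get $\zeta(\sigma,\tfrac12) = (2^\sigma-1)\zeta(\sigma) < 0$ on $(0,1)$, so $\zeta(\sigma,a) \le \zeta(\sigma,\tfrac12) < 0$ for every $a \ge \tfrac12$, ruling out zeros. If $a < \tfrac12$, then $\zeta(0,a) = \tfrac12-a > 0$ while $\zeta(\sigma,a) \to -\infty$ as $\sigma \to 1^-$, so the intermediate value theorem produces a zero in $(0,1)$. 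This gives the stated criterion.

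For part $(2)$, existence is again immediate from the endpoints: on $(0,b_2^-)$ and on $(1/2,b_2^+)$ the two boundary values $\zeta(0,a)=\tfrac12-a$ and $\zeta(-1,a)=-B_2(a)/2$ have opposite signs (their sign changes occur exactly at $a=\tfrac12$ and at the roots $a=b_2^\pm$ of $B_2$), so the intermediate value theorem yields a zero in $(-1,0)$. The work is in the converse. Here $\sigma+1 \in (0,1)$, and by part $(1)$ the map $a \mapsto \zeta(\sigma+1,a)$ is positive for small $a$, strictly decreasing, and changes sign exactly once, at some $a_0(\sigma)\in(0,\tfrac12)$; hence $\partial_a\zeta(\sigma,a) = -\sigma\,\zeta(\sigma+1,a)$ is positive then negative, so $a \mapsto \zeta(\sigma,a)$ is strictly unimodal on $(0,1)$ (increasing then decreasing), with both endpoint limits equal to $\zeta(\sigma)<0$.

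Strict unimodality lets me localise the non-existence. On a subinterval the minimum of a unimodal function is attained at an endpoint, so since $\zeta(\sigma,\tfrac12) = (2^\sigma-1)\zeta(\sigma) > 0$ on $(-1,0)$, the range $(b_2^-,1/2)$ carries no zero provided $\zeta(\sigma,b_2^-) \ge 0$; and because $b_2^+ > \tfrac12 > a_0(\sigma)$ places $b_2^+$ in the decreasing part, the range $(b_2^+,1)$ carries no zero provided $\zeta(\sigma,b_2^+) \le 0$ (then $\zeta(\sigma,a) < \zeta(\sigma,b_2^+) \le 0$ for $a > b_2^+$). Both auxiliary functions vanish at $\sigma=-1$ (as $B_2(b_2^\pm)=0$) and have the correct sign at $\sigma=0$ (namely $\pm\sqrt3/6$), so everything comes down to showing that $\zeta(\sigma,b_2^-) > 0$ and $\zeta(\sigma,b_2^+) < 0$ throughout the open interval $(-1,0)$. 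This is the main obstacle: numerically it is a delicate near-cancellation as $\sigma \to -1^+$. I would settle it with Hermite's representation
\[
\zeta(s,a) = \frac{a^{-s}}{2} + \frac{a^{1-s}}{s-1} + 2\int_0^\infty \frac{\sin\!\big(s\arctan(t/a)\big)}{(a^2+t^2)^{s/2}(e^{2\pi t}-1)}\,dt,
\]
in which, for $\sigma \in (-1,0)$, the integral is sign-definite (negative), by bounding the two elementary terms against it; the derivative data at the endpoints, via Lerch's formula $\partial_s\zeta(0,a) = \log\Gamma(a)-\tfrac12\log 2\pi$ and the analogous evaluation of $\partial_s\zeta(-1,a)$, should control the behaviour near $\sigma=-1$ where the cancellation is tightest.
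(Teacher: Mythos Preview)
The paper does not prove this theorem itself; it is quoted from Nakamura \cite{Naka1,Naka2} and used as input for the cases $N=-1,0$ of Theorem~\ref{main1}. That said, the paper's own method for $N\ge1$ (Proposition~\ref{IR} together with the sign analysis of $g_N$) is essentially Nakamura's technique carried to higher $N$, so a comparison is still meaningful.

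Your argument for part~(1) is complete and correct. Part~(2) has a genuine gap. You correctly reduce the non-existence half, via unimodality in $a$, to the two inequalities $\zeta(\sigma,b_2^-)>0$ and $\zeta(\sigma,b_2^+)<0$ for all $\sigma\in(-1,0)$, but you do not prove them: you describe a plan via Hermite's formula and say the endpoint derivatives ``should control'' the behaviour near $\sigma=-1$. Since both functions vanish at $\sigma=-1$, that is precisely where any such bound is tight, and a plan is not a proof; without these two sign claims the unimodality reduction does not close. Nakamura's actual approach (and the paper's, for $N\ge1$) sidesteps this difficulty entirely: rather than freezing $a=b_2^\pm$ and varying $\sigma$, one writes
\[
\Gamma(\sigma)\zeta(\sigma,a)=\int_0^\infty G_0(a,x)\,x^{\sigma-1}\,dx,\qquad -1<\sigma<0,
\]
with $G_0(a,x)=e^{(1-a)x}/(e^x-1)-x^{-1}-(\tfrac12-a)$, and shows by elementary calculus that $G_0(a,x)$ has a fixed sign for every $x>0$ whenever $a\in[b_2^-,\tfrac12]$ or $a\in[b_2^+,1]$. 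This determines the sign of $\zeta(\sigma,a)$ uniformly in $\sigma$ for the whole $a$-range at once, with no near-cancellation to estimate.
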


In this article, we generalize this result to general intervals in the negative real numbers.

\begin{thm}\label{main1}
Let $N \geq -1$ be an integer. Then $\zeta(\sigma,a)$ has real zeros in the interval $(-N-1,-N)$ if and only if $B_{N+1}(a)B_{N+2}(a) < 0$, where $B_n(x)$ is the $n$th Bernoulli polynomial $($defined in Section $\ref{s2})$.
\end{thm}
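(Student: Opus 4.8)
\emph{Plan of proof.} The starting point is Hurwitz's functional equation, which for real $\sigma<0$ (so that $1-\sigma>1$ and the series below converges absolutely) reads
\[
\zeta(\sigma,a)=\frac{2\,\Gamma(1-\sigma)}{(2\pi)^{1-\sigma}}\,T(\sigma),\qquad
T(\sigma):=\sum_{n=1}^{\infty}\frac{\cos\!\bigl(2\pi na-\tfrac{\pi(1-\sigma)}{2}\bigr)}{n^{1-\sigma}}.
\]
Since the prefactor $2\Gamma(1-\sigma)/(2\pi)^{1-\sigma}$ is strictly positive on $\sigma<0$, the real zeros of $\zeta(\cdot,a)$ there coincide with those of $T$, and the two functions share the same sign. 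I would first record the endpoint values: using $\zeta(-m,a)=-B_{m+1}(a)/(m+1)$ one gets that $\zeta(-N,a)$ and $\zeta(-N-1,a)$ have the signs of $-B_{N+1}(a)$ and $-B_{N+2}(a)$ respectively, so the product of the endpoint values has the sign of $B_{N+1}(a)B_{N+2}(a)$. (For $N=-1$ the right endpoint $\sigma=1$ is the pole; there $\zeta(\sigma,a)\to-\infty$ as $\sigma\to1^-$, whose sign agrees with that of $-B_0(a)=-1$, so the bookkeeping is uniform.)

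The direction $(\Leftarrow)$ is then immediate: if $B_{N+1}(a)B_{N+2}(a)<0$ the two endpoint values are nonzero and of opposite sign, and $\zeta(\cdot,a)$ is continuous on the closed interval (for $N\ge0$ there is no pole there), so the intermediate value theorem produces a zero in the open interval $(-N-1,-N)$.

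For the converse I would prove the key Lemma that $T$ has \emph{at most one} zero in $(-N-1,-N)$ and that any such zero is simple. The mechanism is a dominant-term analysis: as $\sigma$ runs over $[-N-1,-N]$ the phase $2\pi a-\tfrac{\pi(1-\sigma)}{2}$ of the $n=1$ summand moves through an interval of length exactly $\pi/2$, so the leading term $\cos(2\pi a-\tfrac{\pi(1-\sigma)}{2})$ changes sign at most once; I would then control the tail $\sum_{n\ge2}$ and, by differentiating termwise, its derivative, using the elementary bound $\sum_{n\ge2}n^{\sigma-1}\le\zeta(1-\sigma)-1$ for the tail and $\sum_{n\ge2}(\tfrac{\pi}{2}+\log n)\,n^{\sigma-1}$ for the tail of the derivative. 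The point is that near a zero of the leading cosine one has $|\sin(2\pi a-\tfrac{\pi(1-\sigma)}{2})|\approx1$, so the leading term's derivative has size $\approx\pi/2$, which I would show dominates the tail derivative; this simultaneously forbids a second crossing and shows the zero is simple. Once the Lemma holds, a same-sign pair of nonzero endpoints forces an even number of sign crossings, hence none, while the boundary cases $B_{N+1}(a)=0$ or $B_{N+2}(a)=0$ place the only zero exactly at an integer endpoint and leave the open interval zero-free; combined with $(\Leftarrow)$ this yields the stated equivalence.

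The main obstacle is making the tail estimates decisive \emph{uniformly in} $N$. The crude bound $\zeta(1-\sigma)-1$ is comfortably small only for $1-\sigma$ large, i.e.\ for $N$ large, whereas for the smallest intervals the tail is genuinely comparable to the leading term and the clean ``leading cosine'' heuristic does not by itself close the argument. I would therefore treat the first few values of $N$ separately---the cases $N=-1$ and $N=0$ are exactly Theorem~\ref{Naka} of Nakamura and can be quoted verbatim---and reserve the quantitative dominant-term-plus-derivative argument for $N$ beyond an explicit threshold, choosing the threshold so that the two regimes overlap.
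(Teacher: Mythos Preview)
Your architecture matches the paper's: the intermediate-value direction from the endpoint values $\zeta(-m,a)=-B_{m+1}(a)/(m+1)$, a separate treatment of small $N$, and a uniqueness mechanism for large $N$. For large $N$ your dominant-term analysis via Hurwitz's functional equation is essentially a direct, real-variable version of what the paper obtains by quoting Spira's theorem (which also rests on the functional equation, but via Rouch\'e). That replacement is legitimate and arguably more elementary; once your tail bounds are decisive (roughly $N\ge4$, where $\zeta(1-\sigma)-1$ and $\sum_{n\ge2}(\tfrac{\pi}{2}+\log n)n^{\sigma-1}$ are comfortably below $\tfrac{\pi}{2}$), the at-most-one-zero lemma goes through and the equivalence follows exactly as you describe.

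The genuine gap is the intermediate range. You acknowledge that the crude tail bound only becomes decisive past some threshold and that the small cases must be done ``separately,'' but the only small cases you can actually dispatch are $N=-1,0$ via Nakamura. A quick check shows your derivative-tail bound at $\sigma=-1$ already exceeds $\pi/2$, so the argument does not close for $N=1$, and $N=2,3$ are similarly out of reach. This is precisely where the paper does almost all of its work: for $N=1,2,3$ it abandons the functional equation entirely and uses the integral representation
\[
\Gamma(s)\zeta(s,a)=\int_0^\infty\Bigl(\frac{e^{(1-a)x}}{e^x-1}-\sum_{n=0}^{N+1}\frac{B_n(1-a)}{n!}x^{n-1}\Bigr)x^{s-1}\,dx
\]
on the strip $-N-1<\mathrm{Re}(s)<-N$, clears denominators to obtain $g_N(a,x)=x(e^x-1)G_N(a,x)$, and shows by direct (lengthy) sign analysis of the power-series coefficients of $e^{(a-1)x}g_N^{(N+2)}(a,x)$ that the integrand has a definite sign exactly when $B_{N+1}(a)B_{N+2}(a)\ge0$. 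Without some substitute for this---either the integral-representation argument or a much sharper oscillatory estimate for $T(\sigma)$ on $(-4,-1)$---your proof does not cover $N=1,2,3$, which are the new cases the theorem is claiming beyond Nakamura.
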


We can easily see that Theorem \ref{Naka} is a special case of Theorem \ref{main1}. Furthermore, we can state this theorem explicitly as follows.

\begin{thm}\label{main2}
Let $N \geq 0$ be an integer. Then $\zeta(\sigma,a)$ has real zeros in the interval $(-N-1,-N)$ if and only if
\begin{align*}
\left\{\begin{array}{ll}
0 < a < b_{N+2}^{-}$ or $1/2 < a < b_{N+2}^{+}\quad &$if N is even$, \\
b_{N+1}^{-} < a < 1/2$ or $b_{N+1}^{+} < a < 1\quad &$if N is odd$,
\end{array} \right.
\end{align*}
where $b_n^{\pm}$ are the two roots of the $n$th Bernoulli polynomial $B_n(x)$ in the interval $(0,1)$.
\end{thm}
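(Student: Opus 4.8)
The plan is to deduce Theorem \ref{main2} directly from Theorem \ref{main1}. Since the latter already reduces the existence of real zeros in $(-N-1,-N)$ to the single sign condition $B_{N+1}(a)B_{N+2}(a)<0$, it suffices to determine, for $a\in(0,1)$, precisely where this product is negative and to verify that the resulting set coincides with the intervals listed in the theorem. Thus the whole argument becomes a sign analysis of two consecutive Bernoulli polynomials on the unit interval, organized by the parity of $N$.

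First I would assemble the standard facts that drive the analysis: the reflection formula $B_n(1-x)=(-1)^nB_n(x)$, the derivative relation $B_n'(x)=nB_{n-1}(x)$, the values $B_n(0)=B_n(1)=B_n$ and $B_n(1/2)=(2^{1-n}-1)B_n$, and the sign rule $(-1)^{m+1}B_{2m}>0$. From these I would establish the zero structure on $(0,1)$: an even-index polynomial $B_{2m}$ has exactly two zeros $b_{2m}^{-}<1/2<b_{2m}^{+}$ with $b_{2m}^{-}+b_{2m}^{+}=1$, while an odd-index polynomial $B_{2m+1}$ has $1/2$ as its only zero in $(0,1)$ (and $B_{2m+1}(0)=B_{2m+1}(1)=0$ for $2m+1\ge 3$). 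Once the zero sets are pinned down, the reflection formula together with the values at $0$ and $1/2$ fixes the sign on each subinterval: $B_{2m}$ has sign $(-1)^{m+1}$ on the outer pieces $(0,b_{2m}^{-})$ and $(b_{2m}^{+},1)$ and sign $(-1)^{m}$ on the middle piece, whereas $B_{2m+1}$ has sign $(-1)^{m+1}$ on $(0,1/2)$ and sign $(-1)^{m}$ on $(1/2,1)$, the latter being constant on each half because the only interior zero is $1/2$.

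With these sign tables I would split on the parity of $N$. If $N=2k$ is even, the two factors are the odd polynomial $B_{N+1}=B_{2k+1}$ and the even polynomial $B_{N+2}=B_{2(k+1)}$, whose common breakpoints on $(0,1)$ are $b_{N+2}^{-}<1/2<b_{N+2}^{+}$; writing $s=(-1)^{k+1}$, the four subinterval products come out as $-s^2,\,s^2,\,-s^2,\,s^2$, so the product is negative exactly on $(0,b_{N+2}^{-})\cup(1/2,b_{N+2}^{+})$. If $N=2k+1$ is odd, the factors are the even polynomial $B_{N+1}=B_{2(k+1)}$ and the odd polynomial $B_{N+2}=B_{2(k+1)+1}$ with breakpoints $b_{N+1}^{-}<1/2<b_{N+1}^{+}$; writing $t=(-1)^{k}$, the products are $t^2,\,-t^2,\,t^2,\,-t^2$, giving negativity exactly on $(b_{N+1}^{-},1/2)\cup(b_{N+1}^{+},1)$. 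In both cases the parity factor cancels from every product (each subinterval contributes $\pm s^2$ or $\pm t^2$), so the answer is independent of $k$ and reproduces the stated intervals verbatim.

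The bookkeeping of signs across the four subintervals is routine; the genuine obstacle is the \emph{exact} zero count, i.e.\ proving that $B_{2m}$ has no zeros in $(0,1)$ beyond $b_{2m}^{\pm}$ and that $B_{2m+1}$ has none beyond $1/2$. Everything else uses only three sample values and the reflection symmetry, but ruling out spurious zeros requires a joint induction on $n$ via Rolle's theorem: the identity $B_n'=nB_{n-1}$ forces the interior critical points of $B_n$ to sit at the zeros of $B_{n-1}$, and comparing this with the known values at $0,1/2,1$ bounds the number of sign changes from both sides. (Alternatively one may invoke the classical results of Inkeri and Lehmer on the real zeros of Bernoulli polynomials.) This is the step I would write out most carefully, since the clean cancellation in the last paragraph rests entirely on there being no extra sign changes to disturb the pattern.
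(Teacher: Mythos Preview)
Your proposal is correct and follows the same route as the paper: the paper states Lemma~\ref{Bpm} (the sign pattern of $B_{2k}$ and $B_{2k+1}$ on $(0,1)$) and then remarks that the equivalence of Theorem~\ref{main1} and Theorem~\ref{main2} ``follows directly'' from it, which is exactly the sign-table argument you spell out. The only difference is sourcing: where you propose to establish the exact zero count of $B_n$ on $(0,1)$ by a Rolle/induction argument (or by citing Inkeri--Lehmer), the paper simply imports this fact from Leeming~\cite{Dav} before stating Lemma~\ref{Bpm}.
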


In the particular cases of $N \geq 4$, we can show the uniqueness of the zero in each interval. More recently, Endo and Suzuki \cite{ES} revealed the uniqueness of the zero in (0,1) and its asymptotic behavior with respect to $a$. In Section 2, we show some properties of the Bernoulli polynomials and the Hurwitz zeta function. In Section 3, we prove Theorem \ref{main1} for the cases of $1 \leq N \leq 3$ and $N \geq 4$, separately. Note that the known results on zeros of the Hurwitz zeta function are reviewed in Nakamura \cite[Section 1.2]{Naka2}. 

\section{Preliminaries}\label{s2} 
In this section, we recall some properties of the Bernoulli polynomials and the Hurwitz zeta function. First, we define the Bernoulli numbers and the Bernoulli polynomials.

\begin{dfn}$($see \cite[Chapter 1, 4]{AIK}$)$
The Bernoulli numbers $B_n$ and the Bernoulli polynomials $B_n(x)$ are defined by means of generating functions,
\begin{align*}
\sum_{n=0}^{\infty}B_n\frac{t^n}{n!} &= \frac{te^t}{e^t-1},\\
\sum_{n=0}^{\infty}B_n(x)\frac{t^n}{n!} &= \frac{te^{xt}}{e^t-1}.
\end{align*}
In particular, $B_n(1) = B_n$ holds.
\end{dfn}

It is known that the $n$th Bernoulli polynomial $B_n(x)$ has exactly one root in each interval $[0,1/2)$ and $[1/2,1)$ for $n \geq 2$, (see \cite[Section 1]{Dav}). Then, we can define $b_n^{-}$ and $b_n^{+}$ as the roots of $B_n(x)$ in the intervals $[0,1/2)$ and $[1/2,1)$, respectively. Especially, we have $b_n^- = 0$ and $b_n^+ = 1/2$ for odd integers $n \geq 3$. Moreover, the following holds.

\begin{lmm}\label{Bpm}
Let $k$ be a positive integer. Then we have\\
$(1)$ $(-1)^{k-1}B_{2k}(x) > 0\ (0 \leq x < b_{2k}^-, b_{2k}^+ < x \leq 1)$, $(-1)^{k-1}B_{2k}(x) < 0\ (b_{2k}^- < x < b_{2k}^+)$.\\
$(2)$ $(-1)^{k-1}B_{2k+1}(x) > 0\ (0<x<1/2)$, $(-1)^{k-1}B_{2k+1}(x) < 0\ (1/2 < x <1)$.
\end{lmm}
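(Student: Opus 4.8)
The plan is to prove both statements simultaneously by induction on $k$, interleaving the even and odd cases, and to rely on three standard facts about the Bernoulli polynomials: the derivative relation $B_n'(x) = nB_{n-1}(x)$, the reflection formula $B_n(1-x) = (-1)^nB_n(x)$, and the special values $B_{2k+1} = B_{2k+1}(1/2) = 0$ together with the sign relation $(-1)^{k-1}B_{2k} > 0$ for the Bernoulli numbers. I will also use the cited fact that, for $n \geq 2$, $B_n(x)$ has exactly one root $b_n^-$ in $[0,1/2)$ and one root $b_n^+$ in $[1/2,1)$. For the base case $k=1$, statement $(1)$ is immediate since $B_2(x) = x^2-x+1/6$ is an upward parabola with roots $b_2^{\pm}$, so that $B_2 > 0$ outside $[b_2^-,b_2^+]$ and $B_2 < 0$ inside; statement $(2)$ for $k=1$ is recovered from the first implication below.

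The inductive engine consists of two implications. First, assuming $(1)$ for a given $k$, I pass to $(2)$ for the same $k$: writing $h(x) := (-1)^{k-1}B_{2k+1}(x)$, the derivative relation gives $h'(x) = (2k+1)(-1)^{k-1}B_{2k}(x)$, so by $(1)$ the function $h$ is strictly increasing on $(0,b_{2k}^-)$, strictly decreasing on $(b_{2k}^-,b_{2k}^+)$, and increasing again on $(b_{2k}^+,1)$. Since $B_{2k+1}$ vanishes at $0$, $1/2$, and $1$, we have $h(0)=h(1/2)=h(1)=0$; because $b_{2k}^-$ is the unique critical point of $h$ in $(0,1/2)$ and is a local maximum, $h$ rises from $0$ and returns to $0$ while staying positive on $(0,1/2)$, and the reflection formula $h(1-x) = -h(x)$ then forces $h < 0$ on $(1/2,1)$, which is $(2)$.

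Second, assuming $(2)$ for $k$, I pass to $(1)$ for $k+1$: writing $f(x) := (-1)^{k}B_{2k+2}(x)$, the derivative relation gives $f'(x) = -(2k+2)(-1)^{k-1}B_{2k+1}(x)$, so $(2)$ makes $f$ strictly decreasing on $(0,1/2)$ and strictly increasing on $(1/2,1)$, with a single minimum at $x = 1/2$. The endpoint value $f(0) = (-1)^{k}B_{2k+2} > 0$ is positive by the sign of the even Bernoulli numbers, and $f(1) = f(0)$ by reflection; since $f$ is monotone on each half-interval while a root of $B_{2k+2}$ exists in each half, $f$ changes sign exactly once at $b_{2k+2}^-$ in $[0,1/2)$ and once at $b_{2k+2}^+$ in $[1/2,1)$, giving $f > 0$ outside $[b_{2k+2}^-,b_{2k+2}^+]$ and $f < 0$ inside, which is $(1)$ with $k$ replaced by $k+1$. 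Chaining the two implications from the base case completes the induction.

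I expect the main obstacle to be the passage from sign information about the derivative to sign information about the polynomial itself: one must rule out any spurious sign change of $h$ or $f$ between the known zeros. This is precisely where the location of the critical points—one root of $B_{2k}$ lying strictly inside each half-interval—and the symmetry about $x = 1/2$ do the essential work. In particular, verifying that $b_{2k}^- \in (0,1/2)$ strictly, which validates the shape argument for $h$, relies on $B_{2k}(0) = B_{2k} \neq 0$.
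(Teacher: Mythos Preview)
Your proof is correct and follows essentially the same approach as the paper. The paper's proof is a terse sketch that lists exactly the ingredients you use---the derivative relation $B_n'(x)=nB_{n-1}(x)$, the values $B_{2k+1}(0)=B_{2k+1}(1/2)=B_{2k+1}(1)=0$, and the sign $(-1)^{k-1}B_{2k}>0$---and says ``combining these results, we can obtain this lemma''; your inductive argument is precisely the combination the paper leaves implicit, with the reflection formula and the cited root-count fact serving as convenient shortcuts rather than genuinely different ideas.
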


\begin{proof}
By Proposition 4.9 in \cite{AIK}, we have $B_n(0) = B_n(1) = B_n\ (n \neq 1)$, and $B_n'(x) = nB_{n-1}(x)\ (n \geq 1)$. In particular, for each positive integer $k$, we have $B_{2k+1}(0) = B_{2k+1}(1/2) = B_{2k+1}(1) =0$. On the other hand, by Corollary 1.16 in \cite{AIK}, it holds that $(-1)^{k-1}B_{2k} > 0$ for each positive integer $k$. Combining these results, we can obtain this lemma.
\end{proof}

By using Lemma \ref{Bpm}, the equivalence of Theorem \ref{main1} and Theorem \ref{main2} for $N\geq 0$ follows directly. For further properties of the Bernoulli polynomials, see \cite{AIK, Dav}. Next, we show an integral representation of the Hurwitz zeta function.

\begin{prp}\label{IR}
Let $N \geq -1$ be an integer. For $-N-1 < \mathrm{Re}(s) < -N$, it holds that
\begin{align*}
\Gamma(s)\zeta(s,a) = \int_0^{\infty}\biggl( \frac{e^{(1-a)x}}{e^x-1} - \sum_{n=0}^{N+1}\frac{B_n(1-a)}{n!}x^{n-1}\biggr) x^{s-1} dx.
\end{align*}
\end{prp}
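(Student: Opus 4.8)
The plan is to start from the classical representation valid in the half-plane of absolute convergence and then push it into the strip $-N-1 < \mathrm{Re}(s) < -N$ by the Riemann-type device of subtracting the initial segment of the Taylor expansion of the integrand at the origin. First I would record the elementary identity $\Gamma(s)\zeta(s,a) = \int_0^{\infty} \frac{e^{(1-a)x}}{e^x-1}\,x^{s-1}\,dx$ for $\mathrm{Re}(s) > 1$, which follows by writing $\Gamma(s)(n+a)^{-s} = \int_0^{\infty} x^{s-1}e^{-(n+a)x}\,dx$, summing over $n \geq 0$, and interchanging sum and integral (justified by absolute convergence for $\mathrm{Re}(s)>1$ via Tonelli), using $\sum_{n\geq 0} e^{-(n+a)x} = e^{-ax}/(1-e^{-x}) = e^{(1-a)x}/(e^x-1)$.

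Next I would extract the behavior at the origin from the generating function: since $\frac{x e^{(1-a)x}}{e^x-1} = \sum_{n=0}^{\infty} B_n(1-a)\frac{x^n}{n!}$ for $|x|<2\pi$, dividing by $x$ gives $\frac{e^{(1-a)x}}{e^x-1} = \sum_{n=0}^{\infty} \frac{B_n(1-a)}{n!}x^{n-1}$, so the subtracted integrand $\frac{e^{(1-a)x}}{e^x-1} - \sum_{n=0}^{N+1}\frac{B_n(1-a)}{n!}x^{n-1}$ is $O(x^{N+1})$ as $x \to 0^+$. Combined with the exponential decay of $e^{(1-a)x}/(e^x-1)$ and the polynomial growth (of degree $N$) of the subtracted sum at infinity, this shows the right-hand integral converges absolutely and is holomorphic precisely on the strip $-N-1 < \mathrm{Re}(s) < -N$: the lower bound guarantees convergence at $0$, the upper bound at $\infty$.

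The key bridging step is to split the integral at $x=1$ and evaluate the contribution of the subtracted polynomial explicitly, since the naive single integral diverges for $\mathrm{Re}(s)>1$ and so cannot be continued directly. I would set $G(s) := \int_0^1\bigl(\frac{e^{(1-a)x}}{e^x-1} - \sum_{n=0}^{N+1}\frac{B_n(1-a)}{n!}x^{n-1}\bigr)x^{s-1}\,dx + \int_1^{\infty} \frac{e^{(1-a)x}}{e^x-1}\,x^{s-1}\,dx + \sum_{n=0}^{N+1}\frac{B_n(1-a)}{n!}\frac{1}{n+s-1}$, obtained from the target integral by separating $\int_0^1$ from $\int_1^{\infty}$ and using $\int_1^{\infty} x^{n+s-2}\,dx = -\frac{1}{n+s-1}$, whose convergence for every $0 \leq n \leq N+1$ holds throughout the strip (the binding case $n=N+1$ requires exactly $\mathrm{Re}(s)<-N$). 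Here the first integral is holomorphic for $\mathrm{Re}(s)>-N-1$, the second is entire, and the finite sum is meromorphic with simple poles at $s = 1, 0, \dots, -N$, so $G$ is meromorphic on the half-plane $\mathrm{Re}(s)>-N-1$.

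Finally I would identify $G(s)$ with $\Gamma(s)\zeta(s,a)$. For $\mathrm{Re}(s)>1$ the same split-and-subtract computation applied to the classical representation — now using $\int_0^1 x^{n+s-2}\,dx = \frac{1}{n+s-1}$ — yields $\Gamma(s)\zeta(s,a) = G(s)$ directly. Since both sides are meromorphic on the connected region $\mathrm{Re}(s) > -N-1$ and agree on $\mathrm{Re}(s)>1$, the identity theorem gives $G(s) = \Gamma(s)\zeta(s,a)$ on the whole region. Restricting to the strip $-N-1 < \mathrm{Re}(s) < -N$, the finite sum recombines as $-\int_1^{\infty} \sum_{n=0}^{N+1}\frac{B_n(1-a)}{n!}x^{n+s-2}\,dx$ and the two integrals fuse back into a single $\int_0^{\infty}$, yielding the asserted formula. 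The main obstacle to watch is precisely this non-convergence of the target integral for large $\mathrm{Re}(s)$; the careful bookkeeping of the convergence ranges of the elementary pieces $\int_0^1$ and $\int_1^{\infty} x^{n+s-2}\,dx$, and recombining into one integral only inside the strip, is what circumvents it.
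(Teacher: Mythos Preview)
Your proposal is correct and follows essentially the same approach as the paper: both start from the classical integral for $\mathrm{Re}(s)>1$, split at $x=1$ into the three pieces $P(s)=\int_1^\infty$, $R_N(s)=\int_0^1(\text{difference})$, and $Q_N(s)=\sum_{n=0}^{N+1}\frac{B_n(1-a)}{n!}\frac{1}{n+s-1}$, observe their analytic behavior on $\mathrm{Re}(s)>-N-1$, and then recombine inside the strip via $\frac{1}{n+s-1}=-\int_1^\infty x^{n+s-2}\,dx$. Your write-up is slightly more explicit about invoking the identity theorem and about the convergence ranges, but the decomposition and the key step are identical to the paper's.
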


\begin{proof}
It is known that the Hurwitz zeta function has the integral representation
\begin{align*}
\zeta(s,a) = \frac{1}{\Gamma(s)}\int_0^{\infty}\frac{e^{(1-a)x}}{e^x-1}x^{s-1}dx,\quad \mathrm{Re}(s) > 1,
\end{align*}
(see \cite[Section 9]{AIK}). Note that the integrand function can be expanded as the Laurent series
\begin{align*}
\frac{e^{(1-a)x}}{e^x-1} = \sum_{n=0}^{\infty} \frac{B_n(1-a)}{n!}x^{n-1}
\end{align*}
for $0 < |x| < 2\pi$.

First, we meromorphically continue the function $\Gamma(s)\zeta(s,a)$ to the half-plane $\mathrm{Re}(s) > -N-1$ by using this integral representation. We divide this integral into three parts,
\begin{align*}
\Gamma(s)\zeta(s,a) &= P(s) + Q_N(s) + R_N(s),\\
P(s) &= \int_1^{\infty} \frac{e^{(1-a)x}}{e^x-1} x^{s-1} dx,\\
Q_N(s) &= \int_0^1 \biggl( \sum_{n=0}^{N+1} \frac{B_n(1-a)}{n!}x^{n-1} \biggr) x^{s-1} dx,\\
R_N(s) &= \int_0^1 \biggl( \frac{e^{(1-a)x}}{e^x-1}-\sum_{n=0}^{N+1} \frac{B_n(1-a)}{n!}x^{n-1} \biggr) x^{s-1} dx.
\end{align*}
Then $P(s)$ is holomorphic in the whole complex plane, $Q_N(s)$ is a rational function, and $R_N(s)$ is holomorphic in the half-plane $\mathrm{Re}(s) > -N-1$, that is, $\Gamma(s)\zeta(s,a)$ is meromorphically continued to the half-plane $\mathrm{Re}(s) > -N-1$.\\

Next we restrict the domain to the strip $-N-1 < \mathrm{Re}(s) < -N$. Then we can write
\begin{align*}
Q_N(s) &= \sum_{n=0}^{N+1} \frac{B_n(1-a)}{n!} \frac{1}{n+s-1}\\
&= -\sum_{n=0}^{N+1} \frac{B_n(1-a)}{n!} \int_1^{\infty}x^{n+s-2} dx\\
&= -\int_1^{\infty} \sum_{n=0}^{N+1} \frac{B_n(1-a)}{n!} x^{n+s-2} dx,
\end{align*}
the last integral being absolutely convergent. We therefore obtain
\begin{align*}
P(s)+Q_N(s)+R_N(s) = \int_0^{\infty} \biggl( \frac{e^{(1-a)x}}{e^x-1}-\sum_{n=0}^{N+1} \frac{B_n(1-a)}{n!}x^{n-1} \biggr) x^{s-1} dx
\end{align*}
for $-N-1 < \mathrm{Re}(s) < -N$.
\end{proof}

Finally, we remark the classical relation between the Bernoulli polynomial and the Hurwitz zeta function.

\begin{lmm}$($\cite[Proposition 9.3]{AIK}$)$\label{BH}
For each integer $n>0$, it holds that
\begin{align*}
\zeta(1-n,a) = -\frac{B_n(a)}{n}.
\end{align*}
\end{lmm}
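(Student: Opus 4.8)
The plan is to read off $\zeta(1-n,a)$ from the residues of the meromorphic function $\Gamma(s)\zeta(s,a)$ at the non-positive integers, reusing the decomposition already obtained in the proof of Proposition \ref{IR}. Fix a positive integer $n$ and set $N=n-1$, so that $N\geq 0$ and the point $s=1-n=-N$ lies in the half-plane $\mathrm{Re}(s)>-N-1$ on which $\Gamma(s)\zeta(s,a)$ is meromorphic. Writing $\Gamma(s)\zeta(s,a)=P(s)+Q_N(s)+R_N(s)$ as in that proof, with $P(s)$ entire, $R_N(s)$ holomorphic for $\mathrm{Re}(s)>-N-1$, and
\[
Q_N(s)=\sum_{m=0}^{N+1}\frac{B_m(1-a)}{m!}\,\frac{1}{m+s-1},
\]
the only singularity of $\Gamma(s)\zeta(s,a)$ near $s=1-n$ comes from the single term $m=N+1=n$ of $Q_N(s)$.

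First I would compute this residue from the $Q_N$ side: the pole at $s=1-n$ is simple, with the contributing term $m=n$, so
\[
\mathrm{Res}_{s=1-n}\,\Gamma(s)\zeta(s,a)=\frac{B_n(1-a)}{n!}.
\]
Next I would compute the same residue from the left-hand side. The Hurwitz zeta function is holomorphic at $s=1-n$ (its only pole is at $s=1$), whereas $\Gamma(s)$ has a simple pole at $s=1-n=-(n-1)$ with residue $(-1)^{n-1}/(n-1)!$. Hence
\[
\mathrm{Res}_{s=1-n}\,\Gamma(s)\zeta(s,a)=\frac{(-1)^{n-1}}{(n-1)!}\,\zeta(1-n,a).
\]
Equating the two expressions and solving yields $\zeta(1-n,a)=(-1)^{n-1}B_n(1-a)/n$.

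The last step, and the one place where a genuine identity rather than routine bookkeeping is needed, is to convert $B_n(1-a)$ into $B_n(a)$. For this I would invoke the reflection formula $B_n(1-x)=(-1)^nB_n(x)$, which drops out of the defining generating function upon sending $t\mapsto -t$:
\[
\sum_{n=0}^{\infty}B_n(x)\frac{(-t)^n}{n!}=\frac{-te^{-xt}}{e^{-t}-1}=\frac{te^{(1-x)t}}{e^t-1}=\sum_{n=0}^{\infty}B_n(1-x)\frac{t^n}{n!}.
\]
Substituting $B_n(1-a)=(-1)^nB_n(a)$ into the formula above gives $\zeta(1-n,a)=(-1)^{2n-1}B_n(a)/n=-B_n(a)/n$, as claimed. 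I expect the only thing to watch is the two residue computations — in particular fixing the sign of the residue of $\Gamma$ at the negative integer $s=1-n$ — since everything else reduces to the reflection identity together with the structure already in place from Proposition \ref{IR}.
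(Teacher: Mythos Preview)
Your argument is correct. The residue computation is clean: with $N=n-1$ the decomposition $\Gamma(s)\zeta(s,a)=P(s)+Q_N(s)+R_N(s)$ from the proof of Proposition~\ref{IR} is valid as a meromorphic identity on $\mathrm{Re}(s)>-N-1$, the only pole at $s=1-n=-N$ comes from the $m=n$ term of $Q_N$, and matching against the known residue of $\Gamma$ gives exactly $\zeta(1-n,a)=(-1)^{n-1}B_n(1-a)/n$, which the reflection identity converts to $-B_n(a)/n$.

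As for comparison: the paper does not prove this lemma at all --- it is quoted directly from \cite[Proposition~9.3]{AIK} with no argument supplied. Your approach is a natural self-contained alternative, and it has the virtue of reusing precisely the analytic continuation already built in Proposition~\ref{IR} rather than importing an outside result. The standard textbook proof (as in \cite{AIK}) typically proceeds either via the Hankel contour representation of $\zeta(s,a)$ or by essentially the same residue bookkeeping you carry out here, so your route is very much in the spirit of how this identity is usually established.
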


\section{Proof of Theorem \ref{main1}}\label{s3} 
We give a proof for the cases of $1 \leq N \leq 3$ and $N \geq 4$.\\

\noindent (1) $1 \leq N \leq 3$ case.\\

For each integer $N \geq -1$, We put
\begin{align*}
G_N(a,x) := \frac{e^{(1-a)x}}{e^x-1} - \sum_{n=0}^{N+1}\frac{B_n(1-a)}{n!}x^{n-1}.
\end{align*}
Then we have $\Gamma(s)\zeta(s,a) = \int_0^{\infty}G_N(a,x)x^{s-1}dx$ for $-N-1 < \mathrm{Re}(s) < -N$ by Proposition \ref{IR}. Moreover we put
\begin{align*}
g_N(a,x):=x(e^x-1)G_N(a,x) = xe^{(1-a)x} - \sum_{n=0}^{N+1}\frac{B_n(1-a)}{n!}x^n (e^x-1),
\end{align*}
and differentiate it $N+2$ times. By the definition of the Bernoulli polynomials, $g_N(a,x)$ has a zero of order $N+3$ at $x=0$. Then we get $g_N^{(k)}(a,0)$ = 0 for $k$ = 0, 1, 2, $\dots$, $N+2$. By direct calculation, we have
\begin{align*}
g_N^{(N+2)}(a,x) &= (N+2)(1-a)^{N+1}e^{(1-a)x}+(1-a)^{N+2}xe^{(1-a)x}\\
&\hspace{60pt} -\sum_{n=0}^{N+1}\frac{B_n(1-a)}{n!}\sum_{k=0}^n \binom{N+2}{k}\binom{n}{k}k!x^{n-k} e^x.
\end{align*}
Putting $m = n-k$, we can get the form
\begin{align*}
e^{(a-1)x}g_N^{(N+2)}(a,x) &= (N+2)(1-a)^{N+1}+(1-a)^{N+2}x\\
&\hspace{45pt} -\sum_{m=0}^{N+1}\Biggl( \sum_{k=0}^{N+1-m}\binom{N+2}{k} B_{m+k}(1-a) \Biggr) \frac{x^m}{m!} \cdot e^{ax}.
\end{align*}

\noindent (1-i) For the case of $N$ = 1, we have
\begin{align*}
&e^{(a-1)x}g_1^{(3)}(a,x)\\
&\hspace{5pt} = 3(1-a)^2+(1-a)^3x-\Biggl( 3(1-a)^2+(1-3a)(1-a)x+\frac{1}{2}B_2(1-a)x^2\Biggr) \cdot e^{ax}\\
&\hspace{5pt} = -2a(1-a)(1-2a)x\\
&\hspace{40pt} + \sum_{n=2}^{\infty}a^{n-2}\Biggl( -3(1-a)^2a^2-n(1-3a)(1-a)a-\frac{n(n-1)}{2}B_2(1-a) \Biggr) \frac{x^n}{n!}.\\
\end{align*}

Let $0 < a \leq b_2^{-}$. Then we have $-2a(1-a)(1-2a) < 0$. For $n \geq 2$, we have
\begin{align*}
&-3(1-a)^2a^2 < 0,\\
&-n(1-3a)(1-a)a < 0,\\
&-\frac{n(n-1)}{2}B_2(1-a) \leq 0.
\end{align*}
By these inequalities, it holds that $e^{(a-1)x}g_1^{(3)}(a,x) <0$ for all $x >0$. By $g_1(a,0) = g_1^{(1)}(a,0) = g_1^{(2)}(a,0) = g_1^{(3)}(a,0) = 0$, we obtain $g_1(a,x) <0$, $G_1(a,x) <0$. Then we have $\Gamma(\sigma)\zeta(\sigma, a) < 0$ for all $\sigma \in (-2,-1)$. In general, for an integer $k \geq 0$, it holds that $(-1)^{k-1}\Gamma(\sigma) > 0$ for all $\sigma \in (-k-1,-k)$. Thus we get $\zeta(\sigma,a) <0$ for all $\sigma \in (-2,-1)$, that is, $\zeta(\sigma,a)$ has no real zeros in the interval $(-2,-1)$. \\

Let $1/2 \leq a \leq b_2^{+}$. Then we have $-2a(1-a)(1-2a) \geq 0$. For $n \geq 2$, we have
\begin{align*}
&-n(1-3a)(1-a)a > 0,\\
&-\frac{n(n-1)}{2}B_2(1-a) \geq 0.
\end{align*}
For the remaining part, it holds that
\begin{align*}
-3(1-a)^2a^2-n(1-3a)(1-a)a &\geq -3(1-a)^2a^2-2(1-3a)(1-a)a\\
&= -a(1-a)(2-3a-3a^2) >0.
\end{align*}
Thus we obtain $e^{(a-1)x}g_1^{(3)}(a,x) >0$, that is, $\zeta(\sigma,a) >0$ for all $\sigma \in (-2,-1)$ in the same way.\\

Finally let $b_2^{-} < a < 1/2$ or $b_2^{+} < a < 1$, then it holds that $B_2(a)B_3(a) < 0$. By Lemma \ref{BH}, we see that $\zeta(-1,a) = -B_2(a)/2$ and $\zeta(-2,a) = -B_3(a)/3$. By assumption, we have $\zeta(-1,a)\zeta(-2,a)<0$, that is, $\zeta(\sigma,a)$ has zeros in $(-2,-1)$. In the case of $a=1$, the Riemann zeta function $\zeta(\sigma) = \zeta(\sigma,1)$ has no real zeros in $(-2,-1)$. This concludes the proof for the case of $N$ =1.\\

\noindent (1-ii) For the case of $N$ = 2, we have
\begin{align*}
&e^{(a-1)x}g_2^{(4)}(a,x)\\
&\hspace{5pt} = -\frac{1}{6}(1-30a^2+60a^3-30a^4)x-\frac{1}{12}(1-4a-30a^2+60a^3-24a^5)x^2\\
&\hspace{25pt}+ \sum_{n=3}^{\infty}a^{n-3}\Biggl( -4(1-a)^3a^3-\frac{n}{6}(7-6a)(1-6a+6a^2)a^2\\
&\hspace{35pt}-\frac{n(n-1)}{12}(1-18a+42a^2-24a^3)a-\frac{n(n-1)(n-2)}{6}B_3(1-a) \Biggr) \frac{x^n}{n!}.
\end{align*}

Let $b_4^{-} \leq a \leq 1/2$. Then we have
\begin{align*}
&-\frac{1}{6}(1-30a^2+60a^3-30a^4) \geq 0,\\
&-\frac{1}{12}(1-4a-30a^2+60a^3-24a^5) > 0.
\end{align*}
For $n \geq 3$, we have
\begin{align*}
&-\frac{n}{6}(7-6a)(1-6a+6a^2)a^2 > 0,\\
&-\frac{n(n-1)}{12}(1-18a+42a^2-24a^3)a > 0,\\
&-\frac{n(n-1)(n-2)}{6}B_3(1-a) \geq 0.
\end{align*}
For the remaining part, it holds that
\begin{align*}
&-4(1-a)^3a^3-\frac{n(n-1)}{12}(1-18a+42a^2-24a^3)a\\
&\hspace{15pt} \geq -4(1-a)^3a^3-\frac{3(3-1)}{12}(1-18a+42a^2-24a^3)a\\
&\hspace{15pt} = -\frac{1}{2}(1-18a+50a^2-48a^3+24a^4-8a^5)a > 0.
\end{align*}
Thus we obtain $e^{(a-1)x}g_2^{(4)}(a,x) >0$ for all $x >0$, that is, $\zeta(\sigma,a)<0$ for all $\sigma \in (-3,-2)$.\\

Let $b_4^{+} \leq a \leq 1$. Then we have
\begin{align*}
&-\frac{1}{6}(1-30a^2+60a^3-30a^4) \leq 0,\\
&-\frac{1}{12}(1-4a-30a^2+60a^3-24a^5) < 0.
\end{align*}
For $n \geq 3$, we have
\begin{align*}
&-4(1-a)^3a^3 \leq 0,\\
&-\frac{n(n-1)}{12}(1-18a+42a^2-24a^3)a < 0,\\
&-\frac{n(n-1)(n-2)}{6}B_3(1-a) \leq 0.
\end{align*}
For the remaining part, it holds that
\begin{align*}
&-\frac{n}{6}(7-6a)(1-6a+6a^2)a^2-\frac{n(n-1)}{12}(1-18a+42a^2-24a^3)a\\
&\hspace{15pt} \leq n\Biggl\{-\frac{1}{6}(7-6a)(1-6a+6a^2)a^2-\frac{(3-1)}{12}(1-18a+42a^2-24a^3)a\Biggr\}\\
&\hspace{15pt} =-\frac{n}{6}(1-11a-6a^2+54a^3-36a^4)a < 0.
\end{align*}
Thus we obtain $e^{(a-1)x}g_2^{(4)}(a,x) <0$, that is, $\zeta(\sigma,a) >0$ for all $\sigma \in (-3,-2)$.\\

Finally let $0<a<b_4^{-}$ or $1/2<a < b_4^{+}$, $\zeta(\sigma,a)$ has zeros in $(-3,-2)$ in the same way as (1-i). This concludes the proof for the case of $N$ =2. \\

\noindent (1-iii) For the case of $N$ = 3, we have
\begin{align*}
&e^{(a-1)x}g_3^{(5)}(a,x)\\
&\hspace{5pt}= a(1-a)(1-2a)(1+3a-3a^2)x\\
&\hspace{30pt} +\frac{1}{12}(1+9a-9a^2-90a^3+120a^4-30a^6)x^2\\
&\hspace{35pt} +\frac{1}{36}(1+6a+6a^2-99a^3+180a^5-60a^6-30a^7)x^3\\
&\hspace{40pt} + \sum_{n=4}^{\infty}a^{n-4}\Biggl( -5(1-a)^4a^4-n(1-a)(1-10a+20a^2-10a^3)a^3\\
&\hspace{70pt} +\frac{n(n-1)}{12}(1+21a-111a^2+150a^3-60a^4)a^2\\
&\hspace{75pt} +\frac{n(n-1)(n-2)}{36}(1+3a-39a^2+66a^3-30a^4)a\\
&\hspace{80pt} -\frac{n(n-1)(n-2)(n-3)}{24}B_4(1-a) \Biggr) \frac{x^n}{n!}.
\end{align*}

Let $0 < a \leq b_4^{-}$. Then we have
\begin{align*}
&a(1-a)(1-2a)(1+3a-3a^2) > 0,\\
&\frac{1}{12}(1+9a-9a^2-90a^3+120a^4-30a^6) > 0,\\
&\frac{1}{36}(1+6a+6a^2-99a^3+180a^5-60a^6-30a^7) > 0.
\end{align*}
For $n \geq 4$, we have
\begin{align*}
&\frac{n(n-1)}{12}(1+21a-111a^2+150a^3-60a^4)a^2 > 0,\\
&\frac{n(n-1)(n-2)}{36}(1+3a-39a^2+66a^3-30a^4)a > 0,\\
&-\frac{n(n-1)(n-2)(n-3)}{24}B_4(1-a) \geq 0.
\end{align*}
For the remaining parts, we have
\begin{align*}
&-5(1-a)^4a^4+\frac{n(n-1)}{12}(1+21a-111a^2+150a^3-60a^4)a^2\\
&\hspace{15pt} \geq -5(1-a)^4a^4+\frac{4(4-1)}{12}(1+21a-111a^2+150a^3-60a^4)a^2\\
&\hspace{15pt} =(1+21a-116a^2+170a^3-90a^4+20a^5-5a^6)a^2 > 0,
\end{align*}
and
\begin{align*}
&-n(1-a)(1-10a+20a^2-10a^3)a^3\\
&\hspace{60pt} +\frac{n(n-1)(n-2)}{36}(1+3a-39a^2+66a^3-30a^4)a\\
&\hspace{15pt} \geq n\Biggl\{-(1-a)(1-10a+20a^2-10a^3)a^3\\
&\hspace{60pt} +\frac{(4-1)(4-2)}{36}(1+3a-39a^2+66a^3-30a^4)a\Biggr\}\\
&\hspace{15pt} =\frac{n}{6}(1+3a-45a^2+132a^3-210a^4+180a^5-60a^6)a >0.
\end{align*}
Thus we obtain $e^{(a-1)x}g_3^{(5)}(a,x) >0$ for all $x >0$, that is, $\zeta(\sigma,a) >0$ for all $\sigma \in (-4,-3)$. \\

Let $1/2 \leq a \leq b_4^{+}$. Then we have
\begin{align*}
&a(1-a)(1-2a)(1+3a-3a^2) \leq 0,\\
&\frac{1}{12}(1+9a-9a^2-90a^3+120a^4-30a^6) < 0,\\
&\frac{1}{36}(1+6a+6a^2-99a^3+180a^5-60a^6-30a^7) < 0.
\end{align*}
For $n \geq 4$, we have
\begin{align*}
&-5(1-a)^4a^4 < 0,\\
&\frac{n(n-1)}{12}(1+21a-111a^2+150a^3-60a^4)a^2 < 0,\\
&\frac{n(n-1)(n-2)}{36}(1+3a-39a^2+66a^3-30a^4)a < 0,\\
&-\frac{n(n-1)(n-2)(n-3)}{24}B_4(1-a) \leq 0.
\end{align*}
For the remaining part, it holds that
\begin{align*}
&-n(1-a)(1-10a+20a^2-10a^3)a^3\\
&\hspace{60pt} +\frac{n(n-1)}{12}(1+21a-111a^2+150a^3-60a^4)a^2\\
&\hspace{15pt} \leq n\Biggl\{-(1-a)(1-10a+20a^2-10a^3)a^3\\
&\hspace{60pt} +\frac{(4-1)}{12}(1+21a-111a^2+150a^3-60a^4)a^2 \Biggr\}\\
&\hspace{15pt} = \frac{n}{4}(1+17a-67a^2+30a^3+60a^4-40a^5)a^2 <0.
\end{align*}
Thus we obtain $e^{(a-1)x}g_3^{(5)}(a,x) <0$, that is, $\zeta(\sigma,a) <0$ for all $\sigma \in (-4,-3)$.\\

Finally if $b_4^{-} < a < 1/2$ or $b_4^{+} < a < 1$, $\zeta(\sigma,a)$ has zeros in $(-4,-3)$, and if $a=1$, $\zeta(\sigma, 1)$ has no zeros in $(-4,-3)$. This concludes the proof for the case of $N$ =3. \\

\noindent (2) $N \geq 4$ case.\\

We give a proof of the remaining cases of $N \geq 4$ by using Spira's theorem.

\begin{thm}$($\cite{Spi}$)$\label{Spi}
If $\sigma \leq -(4a+1+2[1-2a])$ and $|t| \leq 1$, then $\zeta(s,a) \neq 0$ except for zeros on the negative real axis, one in each interval $(-2n-4a-1, -2n-4a+1)$ for $n\geq 1-2a$.
\end{thm}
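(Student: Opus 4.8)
The plan is to deduce Spira's theorem from Hurwitz's functional equation, which for $\mathrm{Re}(s)$ very negative expresses $\zeta(s,a)$ through the polylogarithm series in the variable $1-s$, where that series converges rapidly. Writing the functional equation as
\begin{align*}
\zeta(s,a) = \frac{\Gamma(1-s)}{(2\pi)^{1-s}}\left(e^{-\pi i(1-s)/2}\sum_{n=1}^{\infty}\frac{e^{2\pi i n a}}{n^{1-s}} + e^{\pi i(1-s)/2}\sum_{n=1}^{\infty}\frac{e^{-2\pi i n a}}{n^{1-s}}\right),
\end{align*}
I would separate the $n=1$ term from the tail. Since $\Gamma(1-s)$ has neither zeros nor poles when $\sigma\ll 0$, the zeros of $\zeta(s,a)$ are governed by the bracket, whose $n=1$ part collapses to the single factor $2\cos\!\big(2\pi a-\tfrac{\pi}{2}(1-s)\big)$.

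The first step is to analyse this leading cosine. Setting $w=2\pi a-\tfrac{\pi}{2}(1-s)$ and $s=\sigma+it$, a direct computation gives
\begin{align*}
|\cos w|^2 = \cos^2\!\Big(2\pi a-\tfrac{\pi}{2}(1-\sigma)\Big) + \sinh^2\!\Big(\tfrac{\pi t}{2}\Big).
\end{align*}
Hence $\cos w$ vanishes only when $t=0$ and $\cos\big(2\pi a-\tfrac{\pi}{2}(1-\sigma)\big)=0$, which happens exactly at $\sigma=-2n-4a$, the centres of the claimed intervals. Away from those centres the identity furnishes a positive lower bound for the leading term throughout the strip $|t|\le 1$, bounded below by $|\sinh(\pi t/2)|$ off the real axis.

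The second step is to bound the tail. Using $|n^{-(1-s)}|=n^{\sigma-1}$ and $|e^{\mp\pi i(1-s)/2}|=e^{\mp\pi t/2}$, the $n\ge 2$ part of the bracket is at most $2\cosh(\tfrac{\pi t}{2})\big(\zeta(1-\sigma)-1\big)$, which decays like $2^{\sigma-1}$ as $\sigma\to-\infty$. I would then run a Rouch\'e argument on a small box centred at each $-2n-4a$: the hypothesis $\sigma\le-(4a+1+2[1-2a])$ is precisely what forces the tail below the leading lower bound on the boundary of every such box, so $\zeta(s,a)$ has exactly one zero inside. Because $\zeta(\bar s,a)=\overline{\zeta(s,a)}$, any non-real zero would occur in a conjugate pair; the single zero in each box is therefore real and lies in $(-2n-4a-1,-2n-4a+1)$. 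The same lower bound, now evaluated on the whole strip rather than near the centres, shows the leading term never drops below the tail off the real axis, ruling out all non-real zeros with $|t|\le 1$.

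The hardest step is the uniform tail estimate with a sharp constant. One must check that the geometric decay $2^{\sigma-1}$ from the $n=2$ term genuinely dominates the leading lower bound simultaneously for all $|t|\le 1$ and all $\sigma$ below threshold, including the delicate region $t\to 0$ where the leading bound degenerates to $|\cos(\cdots)|$. Pinning the constant to $4a+1+2[1-2a]$---so that the threshold coincides with the right endpoint of the leftmost admissible interval---is the fiddly part: it requires tracking how the phase $2\pi a$ displaces the cosine's zero set relative to the even integers and how the floor term $[1-2a]$ selects the smallest permissible index $n\ge 1-2a$.
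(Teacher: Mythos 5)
The paper offers no proof of this theorem at all---it quotes it from Spira \cite{Spi} and remarks only that his argument uses the functional equation of the Hurwitz zeta function together with Rouch\'{e}'s theorem---and your proposal follows exactly that route: collapsing the $n=1$ term of the functional equation to $2\cos\bigl(2\pi a-\frac{\pi}{2}(1-s)\bigr)$, using $|\cos w|^2=\cos^2(\mathrm{Re}\,w)+\sinh^2(\mathrm{Im}\,w)$ to locate the leading zeros at $\sigma=-2n-4a$, bounding the tail by $2\cosh(\pi t/2)\bigl(\zeta(1-\sigma)-1\bigr)$, running Rouch\'{e} on width-$2$ boxes, and invoking $\zeta(\bar{s},a)=\overline{\zeta(s,a)}$ to force the unique zero in each box onto the real axis, all of which is sound. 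The one point you flag as delicate, the degeneration of the lower bound as $t\to 0$, in fact evaporates: on the vertical edges $\sigma=-2n-4a\pm 1$ one has $\cos^2\bigl(2\pi a-\frac{\pi}{2}(1-\sigma)\bigr)=1$ exactly, so the leading term there equals $2\cosh(\pi t/2)$ and dominates the tail bound uniformly in $t$ as soon as $\zeta(1-\sigma)<2$, which together with the horizontal-edge bound $2\sinh(\pi/2)$ makes the threshold bookkeeping for $\sigma\leq -(4a+1+2[1-2a])$ routine.
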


Spira showed this theorem by using the functional equation of the Hurwitz zeta function and Rouch\'{e}'s theorem in complex analysis. Shifting the intervals, we have the following statement.

\begin{cor}
Let $M \geq 2$ be an integer. The Hurwitz zeta function $\zeta(\sigma, a)$ has exactly one zero in each interval $[-2M-2, -2M)$.
\end{cor}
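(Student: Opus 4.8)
The plan is to localize the far-left zeros with Spira's theorem (Theorem \ref{Spi}) and to pin down their exact number in each window using the sign of $\zeta(\sigma,a)$ at the negative even integers, which Lemma \ref{BH} and Lemma \ref{Bpm} make explicit. Throughout, write $I_n := (-2n-4a-1,\,-2n-4a+1)$ for the Spira intervals and fix $M \ge 2$. First I would verify that the whole window $[-2M-2,-2M)$ lies in the half-plane where Theorem \ref{Spi} applies. Checking the ranges $0<a<1/2$, $a=1/2$ and $1/2<a\le 1$ separately gives $4a+2[1-2a]<3$, hence the Spira bound satisfies $-(4a+1+2[1-2a])>-4$. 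Since $\sigma<-2M\le-4$ on the window, every point has real part below this bound, so by Theorem \ref{Spi} all zeros of $\zeta(\sigma,a)$ in $[-2M-2,-2M)$ are real and simple, one in each $I_n$ that the window meets.

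Next, for existence, Lemma \ref{BH} gives $\zeta(-2M,a)=-B_{2M+1}(a)/(2M+1)$ and $\zeta(-2M-2,a)=-B_{2M+3}(a)/(2M+3)$, while Lemma \ref{Bpm}(2) shows that $B_{2M+1}(a)$ and $B_{2M+3}(a)$ have opposite signs for every $a\in(0,1)\setminus\{1/2\}$. Thus $\zeta(\sigma,a)$ takes opposite signs at the two endpoints and must vanish somewhere in the open interval; as the endpoints are nonzero when $a\ne1/2$, the half-open window $[-2M-2,-2M)$ contains at least one zero.

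For uniqueness I would use the elementary but crucial estimate that three consecutive Spira zeros span more than $2$: if $z_n>z_{n+1}>z_{n+2}$ then $z_n>-2n-4a-1$ and $z_{n+2}<-2n-4a-3$, so $z_n-z_{n+2}>2$. Hence any interval of length $2$ holds at most two zeros; and if $[-2M-2,-2M)$ held exactly two, both simple, then $\zeta(\sigma,a)$ would recover its sign at $-2M$, contradicting the opposite signs found above. Therefore there is exactly one zero when $a\ne1/2$. The value $a=1/2$ I would dispatch directly from $\zeta(\sigma,1/2)=(2^{\sigma}-1)\zeta(\sigma)$, whose real zeros are precisely the nonpositive even integers, so that $[-2M-2,-2M)$ contains the single zero $-2M-2$.

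The step I expect to be the real obstacle is uniqueness. Spira's intervals are centered at the non-integer points $-2n-4a$ and are not aligned with the integer endpoints of $[-2M-2,-2M)$, so for general $a$ one cannot simply match the index $n$ to $M$ and read off one zero per window; it is the spread estimate for three consecutive zeros, feeding off the simplicity supplied by Theorem \ref{Spi} together with the sign change found above, that makes the count exact. The boundary value $a=1/2$, where the zeros sit exactly on the even integers and the half-open convention $[-2M-2,-2M)$ is what prevents double counting, also has to be singled out.
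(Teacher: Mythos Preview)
Your approach is essentially the paper's: handle the special parameter values via the explicit factorizations, and for the generic $a$ combine the endpoint sign change (Lemma~\ref{BH} plus Lemma~\ref{Bpm}) with Spira's theorem to force exactly one zero in the length-$2$ window. Your verification of the Spira threshold and your ``three consecutive zeros span more than $2$'' estimate make explicit what the paper states in one line (``does not have more than two real zeros in each interval $[-2M-2,-2M)$ whose length is $2$''); the paper then finishes by noting that the sign change forces an \emph{odd} count with multiplicity, which together with $\le 2$ gives exactly one---this is the same parity argument you phrase as ``two simple zeros would recover the sign''.

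One genuine omission: you single out $a=1/2$ but not $a=1$. Since $B_{2M+1}(1)=B_{2M+1}=0$, the endpoint values $\zeta(-2M,1)$ and $\zeta(-2M-2,1)$ both vanish and your sign-change argument does not apply there. The paper treats $a=1$ alongside $a=1/2$, using $\zeta(s,1)=\zeta(s)$ to see that $-2M-2$ is the unique zero in $[-2M-2,-2M)$; you should add this case.
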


\begin{proof}
By the definition of the Hurwitz zeta function, we have $\zeta(s,1/2)$ = $(2^s-1)\zeta(s)$ and $\zeta(s,1)$ = $\zeta(s)$. Thus, for $a$ = 1/2 and 1, the point $s = -2M-2$ is the only one zero in each $[-2M-2, -2M)$. In other cases, the Hurwitz zeta function has an odd number of real zeros counting multiplicity in each $[-2M-2, -2M)$ by $B_{2M+1}(a)B_{2M+3}(a) <0$ (Lemma \ref{Bpm}) and Lemma \ref{BH}. However, by Theorem \ref{Spi}, the Hurwitz zeta function does not have more than two real zeros in each interval $[2M-2, -2M)$ whose length is 2.
\end{proof}

Theorem \ref{main1} for the cases of $N \geq 4$ follows readily from this corollary and Lemma \ref{BH}. Combining (1-i, ii, iii) and Theorem \ref{Naka}, we therefore obtain Theorem \ref{main1} for the all cases of $N \geq -1$.

\subsection*{Acknowledgement}
The author thanks Professor Masanobu Kaneko and Professor Takashi Nakamura for their helpful comments on an earlier version of this paper.

\end{document}